\newtheorem{theorem}{Theorem}[section]
\newtheorem{proposition}[theorem]{Proposition}
\newtheorem{definition}[theorem]{Definition}
\newtheorem{lemma}[theorem]{Lemma}
\newtheorem{remark}[theorem]{Remark}
\begin{document}
\title{Sesqui-regular graphs with fixed smallest eigenvalue}
\author{Jack H. Koolen$^{1,2}$, Brhane Gebremichel$^1$, Jae Young Yang$^3$,\\
Qianqian Yang$^{1}$\footnote{Q. Yang is the corresponding author.} 
\\ \\
\small $^1$ School of Mathematical Sciences,\\
\small University of Science and Technology of China, \\
\small 96 Jinzhai Road, Hefei, 230026, Anhui, PR China\\
\small $^2$ Wen-Tsun Wu Key Laboratory of CAS,\\
\small 96 Jinzhai Road, Hefei, 230026, Anhui, PR China\\
\small ${}^3$ Samsung SDS,\\
\small Olympic-ro 35-gil 125, Songpa-gu, Seoul, 05510, Republic of Korea\\
\small {\tt e-mail: koolen@ustc.edu.cn, brhaneg220@mail.ustc.edu.cn,}\\\vspace{-3pt}
\small {\tt  piez@naver.com, qqyang91@ustc.edu.cn}\vspace{-3pt}
}
\date{}
\maketitle

\begin{abstract}
Let $\lambda\geq2$ be an integer. For strongly regular graphs with parameters $(v, k, a,c)$ and smallest eigenvalue $-\lambda$, Neumaier gave two bounds on $c$ by using algebraic property of strongly regular graphs. In this paper, we will study a new class of regular graphs called sesqui-regular graphs, which contains strongly regular graphs as a subclass, and prove that for a sesqui-regular graph with parameters $(v,k,c)$ and smallest eigenvalue at least $-\lambda$, if $k$ is very large, then either $c \leq \lambda^2(\lambda -1)$ or $v-k-1 \leq \frac{(\lambda-1)^2}{4} + 1$ holds.

\end{abstract}

\textbf{Keywords}: sesqui-regular graph, smallest eigenvalue, Hoffman graphs, Alon-Boppana Theorem 

\textbf{AMS classification}: 05C50, 05C75, 05C62

\section{Introduction}

A {\em strongly regular graph}  with parameters $(v, k, a, c)$ is a $k$-regular graph with $v$ vertices such that the number of common neighbors of any two adjacent vertices is exactly $a$ and the number of common neighbors of any two distinct non-adjacent vertices is exactly $c$.
Neumaier \cite{-m} proved the following two theorems on strongly regular graphs.

\begin{theorem}[{\cite[Theorem 3.1]{-m}}]\label{-m1} 
Let $\lambda\geq 2$ be an integer. For any connected strongly regular graph $G$ with parameters $(v, k, a,c)$, if the smallest eigenvalue of $G$ is $-\lambda$, then either $G$ is a complete multipartite graph, or $c \leq \lambda^3 (2\lambda -3)$.
\end{theorem}

\begin{theorem}[{\cite[Theorem 5.1]{-m}}]\label{-m2} Let $\lambda\geq2$ be an integer. Except for finitely many exceptions, any strongly regular graph with smallest eigenvalue $-\lambda$ is a Steiner graph, a Latin square graph, or a complete multipartite graph.
\end{theorem}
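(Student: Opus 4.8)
The plan is to prove that, for fixed $\lambda$, the only infinite families of strongly regular graphs with smallest eigenvalue $-\lambda$ are the geometric ones coming from the two extremal types of partial geometry, together with the complete multipartite graphs. First I would dispose of the complete multipartite case and then invoke Theorem~\ref{-m1}: if $G$ is not complete multipartite, then $c=\mu\le\lambda^{3}(2\lambda-3)$, so from now on $c$ may be treated as a constant depending only on $\lambda$. Writing the eigenvalues as $k$, $r$, and $-\lambda$, the strongly regular relations give $k=c+\lambda r$, $a=r-\lambda+c$, and $v=1+k+k(\lambda-1)(r+1)/c$; hence, with $\lambda$ and $c$ fixed, $v\to\infty$ if and only if $r\to\infty$. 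Thus ``all but finitely many graphs'' is the same as ``$r$ (equivalently $k$) sufficiently large,'' and the task becomes to pin down the structure of $G$ in that regime.

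The decisive step is a \emph{claw bound}. By the Hoffman--Delsarte clique bound, every clique of $G$ has at most $1+k/\lambda$ vertices, and I would study the cliques attaining this value (Delsarte cliques). The goal is to show that once $r$ exceeds an explicit polynomial in $\lambda$ and $c$, distinct maximal Delsarte cliques through a common vertex meet only in that vertex, there are exactly $\lambda$ of them, and they partition its neighbourhood; equivalently every edge lies in a unique Delsarte clique. By Bose's theory, the resulting line system exhibits $G$ as the point graph of a partial geometry $pg(s,t,\alpha)$ with $t+1=\lambda$ and $\alpha=c/\lambda$, and here $\alpha$ is bounded because $c$ is. Establishing this claw bound --- controlling how two Delsarte cliques can interact and excluding improper configurations of cliques around a vertex when $r$ is large --- is where I expect the genuine difficulty to lie, and it is the historical heart of the theorem.

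It then remains to classify geometric strongly regular graphs with $t=\lambda-1$ fixed and $\alpha\in\{1,\dots,\lambda\}$ as $s\to\infty$, which I would do by cases in $\alpha$. When $\alpha=\lambda=t+1$ the geometry is a linear space, i.e.\ a Steiner $2$-design, so $G$ is a Steiner graph; when $\alpha=\lambda-1=t$ the geometry is a net (a transversal design) and $G$ is a Latin square graph. These two are the only values of $\alpha$ admitting arbitrarily large $s$. For the remaining values I would bound $s$: the case $\alpha=1$ gives generalized quadrangles, where Higman's inequality $s\le t^{2}$ bounds $s$ once $t=\lambda-1$ is fixed, while for $2\le\alpha\le\lambda-2$ the integrality of the eigenvalue multiplicities of the point graph together with the Krein and absolute bounds leaves only finitely many feasible $s$. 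Assembling the pieces --- Theorem~\ref{-m1} forcing either the complete multipartite case or, for large $r$, a partial geometry, and the partial-geometry trichotomy above --- yields that apart from finitely many exceptions $G$ is a Steiner graph, a Latin square graph, or a complete multipartite graph. The two places demanding the most care are the claw bound in the second paragraph and, secondarily, the uniform finiteness claim for $2\le\alpha\le\lambda-2$, which rests on the full strength of the feasibility conditions for partial geometries.
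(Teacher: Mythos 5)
The statement you were asked to prove is not proved in this paper at all: it is Neumaier's Theorem 5.1, quoted verbatim from \cite{-m}, and the only methodological remark the paper makes is that Neumaier's argument uses the Krein parameters of the underlying association scheme. So the only meaningful comparison is with Neumaier's original proof, and measured against that your skeleton is historically faithful: dispose of the complete multipartite case, use Theorem \ref{-m1} to make $c=\mu$ a constant depending only on $\lambda$, show that for large $r$ (equivalently large $k$, large $v$ --- your identities $k=c+\lambda r$, $a=r-\lambda+c$, $v=1+k+k(\lambda-1)(r+1)/c$ are all correct) the graph is geometric, i.e.\ the point graph of a partial geometry $pg(s,\lambda-1,\alpha)$ with $\alpha=c/\lambda$, and then run the trichotomy $\alpha=\lambda$ (Steiner), $\alpha=\lambda-1$ (Latin square), $\alpha\le\lambda-2$ (finitely many).

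As a proof, however, the proposal has two genuine holes. First, the claw bound --- the assertion that for $r$ large the Delsarte cliques organize themselves into the lines of a partial geometry --- is precisely the content of Neumaier's Theorem 4.7 and is the entire difficulty of the theorem; you name it as ``where the difficulty lies'' but give no argument, so the core remains unproven. Second, your justification of finiteness for $2\le\alpha\le\lambda-2$ is mis-attributed: with $t=\lambda-1$ and $\alpha$ fixed, both Krein conditions and the absolute bound become asymptotically slack as $s\to\infty$ (for the point graph the binding Krein inequality has left side $\sim -ts^{2}$ and right side $\sim +t^{2}(t+2)s$, so it holds trivially; even Higman's $s\le t^{2}$ for $\alpha=1$ comes from applying Krein to the \emph{dual} geometry, not to the point graph), so these conditions cannot exclude large $s$. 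The tool that actually works is the one item on your list you did not exploit: integrality of the multiplicity $f=s(s+1)t(t+1)/\bigl(\alpha(s+t+1-\alpha)\bigr)$ of the eigenvalue $s-\alpha$. Setting $d=s+t+1-\alpha$, one has $s(s+1)\equiv(t-\alpha)(t+1-\alpha)\pmod{d}$, so integrality forces $d\mid t(t+1)(t-\alpha)(t+1-\alpha)$; the right-hand side is a nonzero constant exactly when $\alpha\notin\{t,t+1\}$, which bounds $s$ (roughly $s<t^{4}$) and at the same time explains structurally why the Steiner ($\alpha=t+1$) and Latin square ($\alpha=t$) families are the only ones that escape. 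In short: right skeleton, matching Neumaier's strategy, but the two load-bearing steps are respectively missing and wrongly justified.
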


To prove Theorem \ref{-m1} and Theorem \ref{-m2}, Neumaier used the Krein parameters of the underlying association scheme. In \cite{YK18}, Yang and Koolen used a new method and showed the following result on co-edge regular graphs. A {\em co-edge regular graph} with parameters $(v, k, c)$ is a $k$-regular graph with $v$ vertices such that the number of common neighbors of any two distinct non-adjacent vertices  is exactly $c$.  

\begin{theorem}[{\cite[Theorem 7.1]{YK18}}]\label{YK1}
Let $\lambda \geq 2$ be a real number. There exists a real number $M_1(\lambda)$ such that, for any connected co-edge regular graph $G$ with parameters $(v, k, c)$, if the smallest eigenvalue of $G$ is at least $-\lambda $, then either $c\leq M_1(\lambda)$ or $v-k-1 \leq \frac{(\lambda -1)^2}{4}+1$ holds.
\end{theorem}

This result can be regarded as a generalization of Theorem \ref{-m1}.

Before we state our main results, we need to define a larger class of regular graphs, which contains both the class of strongly regular graphs and the class of co-edge regular graphs, that is, the class of sesqui-regular graphs. A {\em sesqui-regular graph} with parameters $(v, k, c)$ is a $k$-regular graph with $v$ vertices such that the number of common neighbors of any two vertices at distance 2 is exactly $c$. 

The first result is a slight generalization of Theorem \ref{YK1} on sesqui-regular graphs. 

\begin{theorem}\label{YK2}
	Let $\lambda \geq 2$ be a real number. There exists a real number $M_2(\lambda)$ such that, for any connected sesqui-regular graph $G$ with parameters $(v, k, c)$, if the smallest eigenvalue of $G$ is at least $-\lambda $, then either $c\leq M_2(\lambda)$ or $v-k-1 \leq \frac{(\lambda -1)^2}{4}+1$ holds.
\end{theorem}

We omit the proof of Theorem \ref{YK2}, since it can be proven by exactly the same method as in the proof of Theorem  \ref{YK1}, for which we refer to \cite{YK18}. Our main result in the present paper is the following.

\begin{theorem}\label{main sesqui}
Let $\lambda \geq 2$ be an integer. There exists a real number $C(\lambda)$ such that, for any connected sesqui-regular graph $G$ with parameters $(v, k, c)$, if the smallest eigenvalue of $G$ is at least  $-\lambda$ and $k \geq C(\lambda)$, then either $c\leq \lambda^2(\lambda-1)$ or $v-k-1\leq \frac{(\lambda -1)^2}{4}+1$ holds.
\end{theorem}

\begin{remark}
	\begin{enumerate}[\rm(i)]
		\item If we replace the condition that $\lambda\geq2$ is an integer in Theorem \ref{main sesqui} by that $\lambda\geq2$ is a real number with $\lambda-\lfloor\lambda\rfloor-\frac{1}{\lambda-1}<0$, then we can replace $c\leq\lambda^2(\lambda-1)$ in the conclusion by $c\leq\lfloor\lambda\rfloor\lfloor\lambda(\lambda-1)\rfloor$.
		\item We wonder whether we can replace $c\leq\lambda^2(\lambda-1)$ in Theorem \ref{main sesqui}  by $c\leq\lambda^2$. Then the result will be sharp, as a Steiner graph with smallest eigenvalue $-\lambda$ has $c=\lambda^2$.
		\item Also, Theorem \ref{main sesqui} can be regarded as a combinatorial generalization of Theorem \ref{-m2}, since Theorem \ref{-m2} states that the only strongly regular graphs with smallest eigenvalue $-\lambda$ and large valency are the Steiner graps with $c=\lambda^2$, the Latin square graphs with $c= \lambda(\lambda -1)$ and the complete multipartite graphs $K_{t\times \lambda}$.
		\end{enumerate}
	\end{remark}

This paper is organized as follows. In Section \ref{sec:Hoffman}, we introduce the basic definitions and properties of Hoffman graphs, quasi-cliques, and associated Hoffman graphs which are main tools of this paper. In Section \ref{sec:few Hoffman graphs}, we give some Hoffman graphs with smallest eigenvalue less than $-\lambda$, which play a key role in our proof. In Section \ref{sec:proof}, we show a proof of Theorem \ref{main sesqui}.

\section{Preliminaries}\label{sec:Hoffman}

\subsection{Definitions and properties related to Hoffman graphs}
In this section, we introduce the definitions and basic properties of Hoffman graphs and associated Hoffman graphs. For more details or proofs, see \cite{Jang, KKY, KCY, Woo}.

\begin{definition}

A Hoffman graph $\mathfrak{h}$ is a pair $(H, \ell)$ with a labeling map $\ell : V(H) \rightarrow \{{ \textbf{\textit{f,s}}}\}$ satisfying two conditions:

\begin{enumerate}[\rm(i)]

\item the vertices with label \textbf{\textit{f}} are pairwise non-adjacent,

\item every vertex with label \textbf{\textit{f}} has at least one neighbor with label \textbf{\textit{s}}.

\end{enumerate}

\end{definition}

The vertices with label  \textbf{\textit{f}} are called {\it fat} vertices, and the set of fat vertices of $\mathfrak{h}$ is denoted by $V_{\textbf{\textit{f}}}(\mathfrak{h})$. The vertices with label \textbf{\textit{s}} are called {\it slim} vertices, and the set of slim vertices is denoted by $V_{\textbf{\textit{s}}}(\mathfrak{h})$. 

For a vertex $x$ of $\mathfrak{h}$, we define $N_{\mathfrak{h}}^{\textbf{\textit{s}}}(x)$ (resp. $N_{\mathfrak{h}}^{\textbf{\textit{f}}}(x)$) the set of slim (resp. fat) neighbors of $x$ in $\mathfrak{h}$. If every slim vertex of $\mathfrak{h}$ has a fat neighbor, then we call $\mathfrak{h}$ \emph{fat}. In a similar fashion, we define $N^{\textbf{\textit{s}}}_\mathfrak{h}(x_1,x_2)$ (resp. $N^{\textbf{\textit{f}}}_\mathfrak{h}(x_1,x_2)$) to be the set of common slim (resp. fat) neighbors of $x_1$ and $x_2$ in $\mathfrak{h}$.

The \emph{slim graph} of the Hoffman graph $\mathfrak{h}$ is the subgraph of $H$ induced on $V_{\textbf{\textit{s}}}(\mathfrak{h})$.

For a fat vertex $F$ of $\mathfrak{h}$, a \emph{quasi-clique} (with respect to $F$) is a subgraph of the slim graph of $\mathfrak{h}$ induced on the slim vertices adjacent to $F$ in $\mathfrak{h}$, and we denote it by $Q_{\mathfrak{h}}(F)$. Now, we give more basic definitions as follows.

\begin{definition} \label{def: induced sub}

A Hoffman graph $\mathfrak{h}_1 = (H_1, \ell_1)$ is called an {\it induced Hoffman subgraph} of a Hoffman graph $\mathfrak{h}=(H,\ell)$, if $H_1$ is an induced subgraph of $H$ and $\ell(x) = \ell_1 (x)$ for all vertices $x$ of $H_1$.

\end{definition}

\begin{definition}

Two Hoffman graphs $\mathfrak{h}=(H, \ell)$ and $\mathfrak{h}'=(H', \ell')$ are called {\it isomorphic} if there exists a graph isomorphism from $H$ to $H'$ which preserves the labeling.

\end{definition}

\begin{definition}For a Hoffman graph $\mathfrak{h}=(H,\ell)$, there exists a matrix $C$ such that the adjacency matrix $A$ of $H$ satisfies
	\begin{eqnarray*}
		A=\left(
		\begin{array}{cc}
			A_{\textbf{\textit{s}}}  & C\\
			C^{T}  & O
		\end{array}
		\right),
	\end{eqnarray*}
	where $A_{\textbf{\textit{s}}}$ is the adjacency matrix of the slim graph of $\mathfrak{h}$, and $O$ is a zero matrix. The special matrix $Sp(\mathfrak{h})$ of $\mathfrak{h}$ is the real symmetric matrix $A_{\textbf{\textit{s}}}-CC^{T}.$
\end{definition}

The \emph{eigenvalues} of $\mathfrak{h}$ are the eigenvalues of its special matrix $Sp(\mathfrak{h})$, and the smallest eigenvalue of $\mathfrak{h}$ is always denoted by  $\lambda_{\min}(\mathfrak{h})$. 

Now, we discuss some spectral properties of the smallest eigenvalue of a Hoffman graph and its induced Hoffman subgraph.

\begin{lemma}[{\cite[Corollary 3.3]{Woo}}] If $\mathfrak{h}'$ is an induced Hoffman subgraph of $\mathfrak{h}$, then $\lambda_{\min}(\mathfrak{h}') \geq \lambda_{\min}(\mathfrak{h})$ holds.

\end{lemma}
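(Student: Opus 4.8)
The plan is to reduce the statement to two elementary facts about real symmetric matrices, namely Cauchy's interlacing theorem and the monotonicity of the smallest eigenvalue under addition of a positive semidefinite matrix. The one subtlety is that the special matrix $S(\mathfrak{h}')$ of an induced Hoffman subgraph is \emph{not} simply a principal submatrix of $S(\mathfrak{h})$, because the term $CC^{T}$ sums over all fat vertices; discarding fat vertices (or slim vertices) when passing to $\mathfrak{h}'$ alters this product. Identifying the correct correction term is the heart of the argument.

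Concretely, I would write $\mathfrak{h}' = (H_1, \ell_1)$ as the induced Hoffman subgraph on a slim set $U \subseteq V_{\textbf{\textit{s}}}(\mathfrak{h})$ and a fat set $W \subseteq V_{\textbf{\textit{f}}}(\mathfrak{h})$. For each fat vertex $f$ let $c_f \in \mathbb{R}^{U}$ denote the restriction to $U$ of the corresponding column of $C$, so that $(c_f)_x = 1$ precisely when the slim vertex $x \in U$ is adjacent to $f$. Let $P$ be the principal submatrix of $S(\mathfrak{h})$ with rows and columns indexed by $U$. Computing entries directly gives, for $x, y \in U$,
$$S(\mathfrak{h}')_{xy} = (A_{\textbf{\textit{s}}})_{xy} - \sum_{f \in W} (c_f)_x (c_f)_y, \qquad P_{xy} = (A_{\textbf{\textit{s}}})_{xy} - \sum_{f \in V_{\textbf{\textit{f}}}(\mathfrak{h})} (c_f)_x (c_f)_y,$$
and subtracting yields the decomposition
$$S(\mathfrak{h}') = P + \sum_{f \in V_{\textbf{\textit{f}}}(\mathfrak{h}) \setminus W} c_f c_f^{T}.$$
The matrix $E := \sum_{f \in V_{\textbf{\textit{f}}}(\mathfrak{h}) \setminus W} c_f c_f^{T}$ is a sum of rank-one positive semidefinite matrices, hence positive semidefinite.

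Now both operations relating $S(\mathfrak{h})$ to $S(\mathfrak{h}')$ can only raise the smallest eigenvalue. By Cauchy interlacing, passing from $S(\mathfrak{h})$ to its principal submatrix $P$ gives $\lambda_{\min}(P) \geq \lambda_{\min}(S(\mathfrak{h}))$. Since $E$ is positive semidefinite, the Rayleigh-quotient characterization $\lambda_{\min}(M) = \min_{x \neq 0} x^{T} M x / x^{T} x$ gives $x^{T}(P + E)x \geq x^{T} P x$ for every $x$, whence $\lambda_{\min}(S(\mathfrak{h}')) = \lambda_{\min}(P + E) \geq \lambda_{\min}(P)$. Chaining the two inequalities and recalling that $\lambda_{\min}(\mathfrak{h}) = \lambda_{\min}(S(\mathfrak{h}))$ and $\lambda_{\min}(\mathfrak{h}') = \lambda_{\min}(S(\mathfrak{h}'))$ by definition, we obtain $\lambda_{\min}(\mathfrak{h}') \geq \lambda_{\min}(\mathfrak{h})$, as desired.

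The genuinely nontrivial point, and hence where I expect the main difficulty to lie, is recognizing the decomposition above: in particular, that removing fat vertices contributes a positive semidefinite correction rather than merely deleting a symmetric block. The remainder is bookkeeping, since the entry-wise computation is routine and, once the decomposition is in hand, both the interlacing step and the positive-semidefinite monotonicity step are standard. I would also remark that the argument nowhere uses condition (ii) of the definition of a Hoffman graph, so the same reasoning applies to any choice of slim and fat subsets.
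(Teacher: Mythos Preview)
Your argument is correct. The paper itself does not supply a proof of this lemma: it is quoted directly from \cite[Corollary~3.3]{Woo} and used as a black box, so there is no in-paper proof to compare against. That said, your approach is exactly the one underlying the cited result in Woo--Neumaier: write $S(\mathfrak{h}')$ as the appropriate principal submatrix of $S(\mathfrak{h})$ plus the positive semidefinite correction $\sum_{f\notin W} c_f c_f^{T}$ coming from the discarded fat vertices, then apply Cauchy interlacing and Rayleigh-quotient monotonicity. Nothing is missing and no step is unjustified.
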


Let $\mathfrak{h}$ be a Hoffman graph with $V_{\textbf{\textit{f}}}(\mathfrak{h}) = \{F_1, F_2, \ldots,F_t\}$. For a positive integer $p$, let $G(\mathfrak{h}, p)$ be the graph obtained from $\mathfrak{h}$ by replacing every fat vertex of $\mathfrak{h}$ by a complete graph $K_p$ of $p$ slim vertices, and connecting all vertices of the $K_p$ to all neighbors of the corresponding fat vertex by edges.  For the smallest eigenvalue of $\mathfrak{h}$ and $G(\mathfrak{h}, p)$, we have the following:

\begin{theorem}[Hoffman and Ostrowski]\label{OH}
Let $\mathfrak{h}$ be a Hoffman graph and $p$ a positive integer. Then

$$ \lambda_{\min}(G(\mathfrak{h}, p)) \geq \lambda_{\min}(\mathfrak{h}), $$
and
$$ \lim_{p\rightarrow \infty} \lambda_{\min}(G(\mathfrak{h}, p)) = \lambda_{\min}(\mathfrak{h}). $$

\end{theorem}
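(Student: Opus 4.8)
\section{Proof proposal for Theorem \ref{OH}}

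The plan is to prove the two assertions separately. The inequality $\lambda_{\min}(G(\mathfrak{h},p)) \geq \lambda_{\min}(\mathfrak{h})$ should follow by exhibiting, for every eigenvector of $G(\mathfrak{h},p)$ attaining the smallest eigenvalue, a corresponding test vector for the special matrix $S(\mathfrak{h}) = A_{\textbf{\textit{s}}} - CC^T$ whose Rayleigh quotient is no larger; the limit statement should then follow by constructing, for each $\varepsilon > 0$, a test vector for $A(G(\mathfrak{h},p))$ whose Rayleigh quotient approaches $\lambda_{\min}(\mathfrak{h})$ as $p \to \infty$, so that the lower bound is asymptotically tight.

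For the inequality, I would set up coordinates explicitly. Write $V_{\textbf{\textit{f}}}(\mathfrak{h}) = \{F_1,\dots,F_t\}$ and recall that $G(\mathfrak{h},p)$ is obtained by replacing each $F_i$ with a clique $K_p$ on new slim vertices, all joined to the $\mathfrak{h}$-neighbors of $F_i$. Partition the vertex set of $G(\mathfrak{h},p)$ into the original slim vertices $V_{\textbf{\textit{s}}}(\mathfrak{h})$ together with the $t$ blocks of size $p$. The adjacency matrix $A(G(\mathfrak{h},p))$ then has a block form in which the slim–slim block is $A_{\textbf{\textit{s}}}$, the slim–block$_i$ block is $c_i \mathbf{1}^T$ where $c_i$ is the $i$-th column of $C$, and the block$_i$–block$_i$ diagonal block is $J_p - I_p$. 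The key algebraic observation is that the eigenspaces split according to whether a vector is constant on each block or has zero sum on each block: vectors summing to zero on every block live in the kernel of the coupling to the slim part and contribute only eigenvalues of $J_p - I_p$ restricted to that subspace, namely $-1$, which is at least $\lambda_{\min}(\mathfrak{h})$ since $-1 \geq \lambda_{\min}(\mathfrak{h})$ holds for any Hoffman graph with a fat vertex. On the complementary space of block-constant vectors, the quadratic form of $A(G(\mathfrak{h},p))$ reduces, after averaging each block coordinate, to a quadratic form closely related to $S(\mathfrak{h})$, and I would verify that its minimum is exactly $\lambda_{\min}(\mathfrak{h})$ up to lower-order terms in $1/p$.

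For the limit, I would take a unit eigenvector $u$ of $S(\mathfrak{h})$ for $\lambda_{\min}(\mathfrak{h})$, indexed by the slim vertices, and extend it to a vector $\tilde{u}$ on $V(G(\mathfrak{h},p))$ by placing $u$ on the slim coordinates and assigning to each block$_i$ the constant value $-\tfrac{1}{p}(c_i^T u)$ on each of its $p$ vertices (so that the total block mass reproduces the $-CC^T$ correction in the Rayleigh quotient). Computing $\tilde{u}^T A(G(\mathfrak{h},p)) \tilde{u} / \tilde{u}^T \tilde{u}$, the slim–slim contribution gives $u^T A_{\textbf{\textit{s}}} u$, the slim–block cross terms sum to $-2 \sum_i (c_i^T u)^2 = -2\, u^T CC^T u$, and the within-block terms contribute $\sum_i \tfrac{1}{p}(c_i^T u)^2 (p-1)/p$, which tends to $u^T CC^T u$; combining these the numerator tends to $u^T S(\mathfrak{h}) u = \lambda_{\min}(\mathfrak{h})$ while the norm $\tilde{u}^T\tilde{u} \to u^Tu = 1$. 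Hence $\limsup_{p\to\infty}\lambda_{\min}(G(\mathfrak{h},p)) \leq \lambda_{\min}(\mathfrak{h})$, which together with the inequality forces the limit.

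The main obstacle I anticipate is the bookkeeping in the inequality direction: cleanly decomposing $\mathbb{R}^{V(G(\mathfrak{h},p))}$ into the block-constant and block-zero-sum subspaces and showing the Rayleigh quotient is controlled on each piece requires care, because the two subspaces are not exactly invariant under $A(G(\mathfrak{h},p))$ once $p$ is finite, so I would either argue via a careful interlacing/averaging estimate or, more robustly, reduce to the quotient matrix obtained by the equitable-like partition into slim vertices and blocks and analyze its eigenvalues together with the leftover $-1$ eigenvalues. Getting the error terms in $1/p$ to have the correct sign is the delicate part.
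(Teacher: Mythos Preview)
The paper does not supply its own proof of this theorem; it is quoted verbatim from \cite[Theorem~2.2]{KKY} and used as a black box, so there is no in-paper argument to compare against.

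Your outline is essentially a correct proof, and in fact the obstacle you flag at the end is not real: the partition of $V(G(\mathfrak{h},p))$ into the singleton slim vertices together with the $t$ blocks of size $p$ is an \emph{equitable} partition of $G(\mathfrak{h},p)$, so the block-constant subspace and its orthogonal complement (vectors that vanish on slim vertices and have zero sum on every block) are genuinely $A(G(\mathfrak{h},p))$-invariant for every $p$, not just approximately. On the zero-sum complement the eigenvalue is exactly $-1\geq\lambda_{\min}(\mathfrak{h})$, as you note. On the block-constant subspace the action is conjugate to the symmetric matrix
\[
\tilde B_p=\begin{pmatrix} A_{\textbf{\textit{s}}} & \sqrt{p}\,C\\ \sqrt{p}\,C^{T} & (p-1)I_t\end{pmatrix},
\]
and a short completion-of-squares (using $u^{T}A_{\textbf{\textit{s}}}u\geq \lambda_{\min}(\mathfrak{h})\,|u|^{2}+|C^{T}u|^{2}$ and $\lambda_{\min}(\mathfrak{h})\leq -1$) gives $\lambda_{\min}(\tilde B_p)\geq\lambda_{\min}(\mathfrak{h})$ directly, with no error terms in $1/p$ to control. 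Your test-vector computation for the limit is correct as written.
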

For a proof, we refer to \cite[Theorem 2.14]{Jang} and \cite[Theorem 3.2]{KYY3}.

\subsection{Quasi-cliques and associated Hoffman graphs}

For a positive integer $m$,  let $\widetilde{K}_{2m}$ be the graph  on $2m +1$ vertices consisting of a complete graph $K_{2m}$ and a vertex which is adjacent to exactly half of the vertices of $K_{2m}$. 

%

\begin{lemma}[{cf.~\cite[Lemma 3.2]{YK18}}]\label{min2} Let $\lambda\geq 1$ be a real number. There exist minimum positive integers $t(\lambda):=\lfloor\lambda^2\rfloor+1$ and $m(\lambda)$ such that, for any integers $t\geq t(\lambda)$ and $m\geq m(\lambda)$, the smallest eigenvalue of the graph $K_{1,t}$ and the smallest eigenvalue of the graph $\widetilde{K}_{2m}$ both are less than $-\lambda$.\end{lemma}

Let $G$ be a graph that does not contain $\widetilde{K}_{2m}$ as an induced subgraph. For a positive integer $n \geq (m+1)^2$, let $\mathcal{C}(n)$ be the set of maximal cliques of $G$ with at least $n$ vertices. Define the relation $\equiv_n^m$ on $\mathcal{C}(n)$ by $C_1 \equiv_n^m C_2$, if each vertex $x \in C_1$ has at most $m-1$ non-neighbors in $C_2$ and each vertex $y \in C_2$ has at most $m-1$ non-neighbors in $C_1$ for $C_1, C_2 \in \mathcal{C}(n)$. Note that $\equiv_n^m$ on $\mathcal{C}(n)$ is an equivalence relation if $n\geq(m+1)^2$ (see \cite [Lemma 3.1]{KKY}).

For a maximal clique $C \in \mathcal{C}(n)$, let $[C]_n^m$ denote the equivalence class containing $C$ under the equivalence relation $\equiv_n^m$. We can define the term {\it quasi-clique} as follows:

\begin{definition}\label{def: quasi-clique}

Let $m\geq2$ and $n\geq2$ be two integers where $n \geq (m+1)^2$, and let $G$ be a graph that does not contain $\widetilde{K}_{2m}$ as an induced subgraph. For a maximal clique $C \in \mathcal{C}(n)$, the quasi-clique $Q([C]_n^m)$, with respect to the pair $(m,n)$, is the induced subgraph of $G$ on the vertices which have at most $m-1$ non-neighbors in $C$.

\end{definition}

To check the well-definedness of the quasi-clique $Q([C]_n^m)$ for $C \in \mathcal{C}(n)$, we refer the readers to {\rm \cite[Lemma 3.2 and Lemma 3.3]{KKY}}. The reason to define a quasi-clique in Definition \ref{def: quasi-clique} is to construct a Hoffman graph in Definition \ref{def: asso hoff} which is highly related to a given graph. After Definition \ref{def: asso hoff}, we will explain that the quasi-clique with respect to a pair $(m,n)$ in Definition \ref{def: quasi-clique}  essentially coincides with the quasi-clique with respect to a fat vertex defined before Definition \ref{def: induced sub}.

\begin{definition}\label{def: asso hoff}

Let $m\geq2$ and $n\geq2$ be two integers where $n \geq (m+1)^2$ and let $G$ be a graph which does not contain $\widetilde{K}_{2m}$ as an induced subgraph. Let $[C_1]_n^m, [C_2]_n^m, \ldots, [C_t]_n^m$ be all the equivalence classes of $G$ under $\equiv_n^m$. The associated Hoffman graph $\mathfrak{g} = \mathfrak{g}(G, m, n)$ is the Hoffman graph with the following properties:

\begin{enumerate}[\rm(i)]

\item $V_{\textbf{\textit{s}}}(\mathfrak{g}) = V(G)$ and $V_{\textbf{\textit{f}}}(\mathfrak{g}) = \{F_1, \dots, F_t\}$, where $t$ is the number of the equivalence classes of $G$ under $\equiv_n^m$,

\item the slim graph of $\mathfrak{g}$ is isomorphic to $G$,

\item the fat vertex $F_i$ is adjacent to every vertex of the quasi-clique $Q([C_i]_n^m)$ for $i=1,2,\dots,t$.
\end{enumerate}

\end{definition}

From the above definition of associated Hoffman graphs, we find that for each $i=1,\dots,t$, the quasi-clique $Q([C_i]_n^m)$ with respect to the pair $(m, n)$ is exactly the quasi-clique $Q_{\mathfrak{g}}(F_i)$ in $\mathfrak{g} = \mathfrak{g}(G, m, n)$ with respect to the fat vertex $F_i$.

The following proposition shows an important property of the associated Hoffman graph.

\begin{proposition}[{\cite[Proposition 4.1]{KKY}}]\label{asso}
	Let $m \geq 2, \phi,\sigma,p \geq 1$ be integers.
	There exists a positive integer $n = n(m, \phi, \sigma,  p) \geq (m+1)^2$ such that, for any graph $G$, any integer $n' \geq n$ and any Hoffman graph $\mathfrak{h}$ with at most $\phi$ fat vertices and at most $\sigma$ slim vertices, the graph  $G(\mathfrak{h}, p)$ is an induced subgraph of $G$, provided that the graph $G$ satisfies the following conditions:
	\begin{enumerate}[\rm(i)]
		\item the graph $G$ does not contain  $\widetilde{K}_{2m}$ as an induced subgraph,
		\item its associated Hoffman graph $\mathfrak{g} = \mathfrak{g}(G, m, n')$ contains $\mathfrak{h}$ as an induced Hoffman subgraph.
	\end{enumerate}
\end{proposition}

\section{Some Hoffman graphs}\label{sec:few Hoffman graphs}

Let $\lambda\geq2$ be a real number. In this section, we study some Hoffman graphs with smallest eigenvalue less than $-\lambda$, which will be used in the proof of Theorem \ref{main sesqui} later.

For a graph $H$, let $\mathfrak{q}(H)$ be the Hoffman graph obtained by attaching one fat vertex to all vertices of $H$. Then it is easily checked that $\lambda_{\min}(\mathfrak{q}(H)) = -\lambda_{\max}(\overline{H})-1$ holds, where $\lambda_{\max}(\overline{H})$ is the largest eigenvalue of the complement $\overline{H}$ of $H$. 

\begin{lemma}[{\cite[Lemma 3.1]{YK18}}]\label{l+2}

Let $\lambda \geq 2$ be a real number. Let $H$ be a graph with $\lfloor (\lambda-1)^2\rfloor +2$ vertices which has at least one isolated vertex. Then $\lambda_{\min}(\mathfrak{q}(H))< -\lambda$. 
\end{lemma}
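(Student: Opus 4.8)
We need to show that for a graph $H$ on $\lambda^2+2$ vertices with at least one isolated vertex, the Hoffman graph $\mathfrak{q}(H)$ (one fat vertex attached to all vertices of $H$) satisfies $\lambda_{\min}(\mathfrak{q}(H)) < -\lambda$. The stated identity $\lambda_{\min}(\mathfrak{q}(H)) = -\lambda_{\max}(\overline{H})$ converts this immediately into a statement about the complement: we must prove $\lambda_{\max}(\overline{H}) > \lambda$.

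**Plan.** The key observation is that an isolated vertex in $H$ becomes a vertex of full degree $\lambda^2+1$ in the complement $\overline{H}$, since $H$ has $\lambda^2+2$ vertices. So $\overline{H}$ contains a star $K_{1,\lambda^2+1}$ as a spanning subgraph rooted at that vertex. By interlacing (or directly by the fact that $\lambda_{\max}$ is monotone under adding edges, since all adjacency-matrix entries are nonnegative), we have $\lambda_{\max}(\overline{H}) \geq \lambda_{\max}(K_{1,\lambda^2+1}) = \sqrt{\lambda^2+1} > \lambda$. The plan is to first invoke the identity $\lambda_{\min}(\mathfrak{q}(H)) = -\lambda_{\max}(\overline{H})$ stated just above the lemma, so that it suffices to bound $\lambda_{\max}(\overline{H})$ from below.

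**Key steps in order.** First I would fix an isolated vertex $x$ of $H$ and note that in $\overline{H}$ the vertex $x$ is adjacent to all other $\lambda^2+1$ vertices. Second, I would use the spanning star $K_{1,\lambda^2+1} \subseteq \overline{H}$: since adding edges to a graph can only increase the largest adjacency eigenvalue, $\lambda_{\max}(\overline{H}) \geq \lambda_{\max}(K_{1,\lambda^2+1})$. Third, I would recall the elementary fact that the largest eigenvalue of the complete bipartite graph $K_{1,t}$ is $\sqrt{t}$ (this is consistent with the formula $\lambda_{\min}(K_{1,t}) = -\sqrt{t}$ used in the proof of Lemma~\ref{min2}). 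Taking $t = \lambda^2+1$ gives $\lambda_{\max}(\overline{H}) \geq \sqrt{\lambda^2+1} > \lambda$. Finally, combining with the identity yields $\lambda_{\min}(\mathfrak{q}(H)) = -\lambda_{\max}(\overline{H}) \leq -\sqrt{\lambda^2+1} < -\lambda$, as desired.

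**Anticipated obstacle.** This argument is almost entirely routine once the translation to the complement is made; the only point requiring a moment's care is justifying the monotonicity of $\lambda_{\max}$ under adding edges, which follows from the Perron–Frobenius theorem applied to the nonnegative symmetric adjacency matrix (or from a one-line Rayleigh-quotient argument using the Perron eigenvector of the subgraph extended by zeros). I do not expect a genuine difficulty here; the substance of the lemma is entirely captured by the "isolated vertex becomes dominating vertex in the complement" observation, and the numeric slack $\sqrt{\lambda^2+1} > \lambda$ is strict, giving the strict inequality in the conclusion.
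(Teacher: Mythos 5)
Your proposal is correct and follows essentially the same route as the paper's own proof: both reduce via the identity $\lambda_{\min}(\mathfrak{q}(H)) = -\lambda_{\max}(\overline{H})$, observe that the isolated vertex forces $\overline{H}$ to contain $K_{1,\lambda^2+1}$ as a subgraph, and invoke Perron--Frobenius monotonicity to get $\lambda_{\max}(\overline{H}) \geq \sqrt{\lambda^2+1} > \lambda$. Your extra care in justifying the edge-monotonicity of $\lambda_{\max}$ is a fine addition but does not change the argument.
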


\begin{lemma}\label{pp}

Let $\lambda  \geq 2$ be a real number. Let $\{H(\lambda)_i\}_{i=1}^{r(\lambda)}$ be the set of all graphs with $\lfloor (\lambda-1)^2\rfloor+2$ vertices which have at least one isolated vertex. There exists a positive integer $p'(\lambda)$ such that, for every integer $p'\geq p'(\lambda)$, the inequality $\lambda_{\min}(G(\mathfrak{q}(H(\lambda)_i), p'))< -\lambda$ holds for all $i= 1, \dots, r(\lambda)$.

\end{lemma}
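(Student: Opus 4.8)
The plan is to combine Lemma \ref{l+2} with the limiting statement in Theorem \ref{OH}, exploiting the crucial fact that the family $\{H(\lambda)_i\}_{i=1}^{r(\lambda)}$ is finite. First I would observe that for each fixed $i$, Lemma \ref{l+2} already guarantees $\lambda_{\min}(\mathfrak{q}(H(\lambda)_i)) < -\lambda$, since each $H(\lambda)_i$ is a graph on $\lambda^2 + 2$ vertices possessing an isolated vertex. Thus every individual Hoffman graph $\mathfrak{q}(H(\lambda)_i)$ has smallest eigenvalue strictly below $-\lambda$.

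Next I would invoke the limit part of Theorem \ref{OH}, which asserts that $\lim_{p \to \infty} \lambda_{\min}(G(\mathfrak{q}(H(\lambda)_i), p)) = \lambda_{\min}(\mathfrak{q}(H(\lambda)_i))$ for each $i$. Since this limit is strictly less than $-\lambda$, setting $\delta_i := -\lambda - \lambda_{\min}(\mathfrak{q}(H(\lambda)_i)) > 0$, the definition of the limit furnishes an integer $p_i$ such that $\lambda_{\min}(G(\mathfrak{q}(H(\lambda)_i), p)) < \lambda_{\min}(\mathfrak{q}(H(\lambda)_i)) + \delta_i/2 < -\lambda$ for every $p \geq p_i$.

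Finally, because there are only finitely many graphs on $\lambda^2 + 2$ vertices, the index set $\{1, \dots, r(\lambda)\}$ is finite, and I would simply put $p'(\lambda) := \max_{1 \leq i \leq r(\lambda)} p_i$. For this choice one has $p'(\lambda) \geq p_i$ for every $i$, so the inequality $\lambda_{\min}(G(\mathfrak{q}(H(\lambda)_i), p'(\lambda))) < -\lambda$ holds simultaneously for all $i = 1, \dots, r(\lambda)$, which is exactly the desired conclusion.

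I do not expect any serious obstacle here. The only point requiring care is that Theorem \ref{OH} produces the strict inequality only for $p$ sufficiently large, with a threshold $p_i$ that may vary with $i$; the passage from pointwise thresholds to a single uniform bound is precisely where finiteness of the family is used, permitting the maximum $p'(\lambda)$ to be taken over the finitely many $p_i$. The heavier conceptual work has already been carried out in Lemma \ref{l+2} and Theorem \ref{OH}, so this lemma is essentially a uniformization of those two results across a finite list.
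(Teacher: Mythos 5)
Your proof is correct and follows essentially the same route as the paper's: apply Lemma \ref{l+2} to each $H(\lambda)_i$, use the limit statement in Theorem \ref{OH} to get a threshold $p_i$ for each $i$, and take the maximum over the finite family. In fact you are slightly more careful than the paper, since your $\varepsilon$--$\delta$ argument makes explicit why the strict inequality still holds at $p'(\lambda) = \max_i p_i$ (and not merely at each individual $p_i$), a point the paper's proof glosses over.
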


\begin{proof}

For each $i$, we have $\lambda_{\min}(\mathfrak{q}(H(\lambda)_i)) < -\lambda$ by Lemma \ref{l+2}. Then for 
$i=1, \dots, r(\lambda)$, there exist positive integers $p'_i(\lambda)$'s such that for every integer $p_i'\geq p'_i(\lambda)$, \[\lambda_{\min}(G(\mathfrak{q}(H(\lambda)_i), p'_i) )< -\lambda\] 
holds by Theorem \ref{OH}. By taking $p'(\lambda) = \max\limits_i p'_i(\lambda)$, we complete the proof.\end{proof}

\begin{remark} 
 Let $H$ be the graph $K_{\lfloor (\lambda-1)^2\rfloor +1} \cup K_1$ and let $H'$ be any graph with ${\lfloor (\lambda-1)^2\rfloor+2}$ vertices which has at least one isolated vertex. It can be shown that $\lambda_{\min}(G(\mathfrak{q}(H), p')) \geq \lambda_{\min}(G(\mathfrak{q}(H'), p'))$ holds for every integer $p'\geq p'(\lambda)$.
\end{remark}

Now, we introduce three Hoffman graphs $\mathfrak{h}^{(t)}$, $\mathfrak{h}^{(t,1)}$ and $\mathfrak{c}_t$. The Hoffman graph $\mathfrak{h}^{(t)}$ is the Hoffman graph with one slim vertex adjacent to $t$ fat vertices. The Hoffman graph $\mathfrak{h}^{(t,1)}$ is the Hoffman graph with two adjacent slim vertices $s_1$ and $s_t$ such that $s_t$ is adjacent to $t$ fat vertices and $s_1$ is adjacent to one fat vertex different from the $t$ fat neighbors of $s_t$ (see Figure \ref{fig:cherry}). Note that $\lambda_{\min}(\mathfrak{h}^{(t)}) = -t$ and $\lambda_{\min}(\mathfrak{h}^{(t,1)}) = \frac{-t-1-\sqrt{t^2-2t+5}}{2}$.

\begin{figure}[h]
\centering
\includegraphics[scale=1.2]{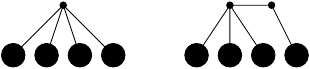}
\caption{The Hoffman graphs $\mathfrak{h}^{(4)}$ and $\mathfrak{h}^{(3,1)}$}
\label{fig:cherry}
\end{figure}
\noindent The Hoffman graph $\mathfrak{c}_t$ is the Hoffman graph obtained by attaching one fat vertex to $t$ vertices of a $K_{t+1}$ (see Figure \ref{fig:kbb}). Then $\lambda_{\min}(\mathfrak{c}_t) = \frac{-1-\sqrt{1+4t}}{2}$.

\begin{figure}[H]
\centering
\includegraphics[scale=1.2]{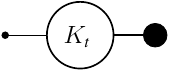}
\caption{The Hoffman graph $\mathfrak{c}_t$}
\label{fig:kbb}
\end{figure}

For the smallest eigenvalue of $\mathfrak{h}^{(t)}$, $\mathfrak{h}^{(t,1)}$ and $\mathfrak{c}_t$, we have the following.

\begin{lemma}\label{ppp}
	
	Let $\lambda \geq 2$ be a real number with $\lambda-\lfloor\lambda\rfloor-\frac{1}{\lambda-1}<0$. There exists a positive integer $p''(\lambda)$ such that for every integer $p''\geq p''(\lambda) $, the following three inequalities hold.
	
	\begin{enumerate}[\rm(i)]
		
		\item $\lambda_{\min}(G(\mathfrak{h}^{(\lfloor\lambda\rfloor+1)}, p''))< -\lambda$, 
		
		\item $\lambda_{\min}(G(\mathfrak{h}^{(\lfloor\lambda\rfloor,1)}, p''))< -\lambda$,
		
		\item $\lambda_{\min}(G(\mathfrak{c}_{\lfloor\lambda^2 -\lambda\rfloor+1}, p''))<-\lambda$.
		
	\end{enumerate}
	
\end{lemma}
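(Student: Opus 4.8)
The plan is to reduce each of the three smallest-eigenvalue computations to a computation on the special matrix $S(\mathfrak{h}) = A_{\textbf{\textit{s}}} - CC^T$ of the relevant Hoffman graph, and then invoke Theorem \ref{OH} to transfer the strict inequality to the slim graph $G(\mathfrak{h},p)$ for $p$ large. Concretely, for each of $\mathfrak{h}^{(\lambda+1)}$, $\mathfrak{h}^{(\lambda,1)}$ and $\mathfrak{c}_{\lambda^2-\lambda+1}$, I would first verify that its own smallest eigenvalue is \emph{strictly} less than $-\lambda$, using the closed-form expressions already recorded in the excerpt. Then Theorem \ref{OH} gives $\lim_{p\to\infty}\lambda_{\min}(G(\mathfrak{h},p)) = \lambda_{\min}(\mathfrak{h}) < -\lambda$, so for each of the three Hoffman graphs there is a threshold $p_j(\lambda)$ beyond which $\lambda_{\min}(G(\mathfrak{h},p)) < -\lambda$; taking $p''(\lambda)$ to be the maximum of these three thresholds finishes the proof. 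This is exactly the mechanism used in the proof of Lemma \ref{pp}, so the role of the present lemma's proof is mainly to check the three strict inequalities at the Hoffman-graph level.

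For item (i), the excerpt states $\lambda_{\min}(\mathfrak{h}^{(t)}) = -t$. Setting $t = \lambda+1$ gives $\lambda_{\min}(\mathfrak{h}^{(\lambda+1)}) = -(\lambda+1) < -\lambda$, which is strict for every $\lambda \geq 2$; this is the cleanest of the three. For item (ii), I would substitute $t=\lambda$ into the recorded formula $\lambda_{\min}(\mathfrak{h}^{(t,1)}) = \frac{-t-1-\sqrt{t^2-2t+5}}{2}$, obtaining
$$\lambda_{\min}(\mathfrak{h}^{(\lambda,1)}) = \frac{-\lambda-1-\sqrt{\lambda^2-2\lambda+5}}{2},$$
and then check that this is strictly less than $-\lambda$, i.e. that $\sqrt{\lambda^2-2\lambda+5} > \lambda-1$. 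Squaring (both sides nonnegative for $\lambda\geq 1$) reduces this to $\lambda^2-2\lambda+5 > \lambda^2-2\lambda+1$, i.e. $5>1$, which holds unconditionally. For item (iii), I would use $\lambda_{\min}(\mathfrak{c}_n) = \frac{-1-\sqrt{1+4n}}{2}$ with $n = \lambda^2-\lambda+1$, giving
$$\lambda_{\min}(\mathfrak{c}_{\lambda^2-\lambda+1}) = \frac{-1-\sqrt{4\lambda^2-4\lambda+5}}{2};$$
to see this is $< -\lambda$ I would check $\sqrt{4\lambda^2-4\lambda+5} > 2\lambda-1$, which after squaring becomes $4\lambda^2-4\lambda+5 > 4\lambda^2-4\lambda+1$, i.e. $5>1$, again holding for all $\lambda$. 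Each of the three strict inequalities thus comes down to a one-line estimate.

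Since the per-graph smallest-eigenvalue formulas are given to us in the excerpt, I do not expect any genuine obstacle; the proof is essentially an application of Theorem \ref{OH} three times together with the three elementary inequalities above, after which $p''(\lambda) := \max\{p_1(\lambda), p_2(\lambda), p_3(\lambda)\}$ works simultaneously. If I wanted to be fully self-contained I would also re-derive the three eigenvalue formulas via equitable partitions (as was done for $\widetilde{K}_{2m}$ in Lemma \ref{min2}), but the only mildly delicate point is ensuring the quotient matrix of an equitable partition captures the \emph{smallest} eigenvalue rather than missing it; for $\mathfrak{c}_n$ and $\mathfrak{h}^{(t,1)}$ the non-captured eigenvalues are $\geq -1 > -\lambda$, so they are irrelevant to the sign of the inequality and can be dismissed without further calculation. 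The lemma then follows immediately, as the excerpt already remarks.
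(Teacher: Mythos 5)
Your proposal is correct and follows exactly the paper's route: the paper dispatches this lemma with the single remark that it ``follows immediately by Theorem \ref{OH},'' relying on the recorded formulas $\lambda_{\min}(\mathfrak{h}^{(\lambda+1)})=-(\lambda+1)$, $\lambda_{\min}(\mathfrak{h}^{(\lambda,1)})=\frac{-\lambda-1-\sqrt{\lambda^2-2\lambda+5}}{2}$ and $\lambda_{\min}(\mathfrak{c}_{\lambda^2-\lambda+1})=\frac{-1-\sqrt{4\lambda^2-4\lambda+5}}{2}$, each strictly below $-\lambda$, and then taking $p''(\lambda)$ large enough, just as in Lemma \ref{pp}. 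Your three one-line verifications and the max-of-thresholds step are precisely the details the paper leaves implicit, so there is nothing to add.
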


\begin{proof}
	It is straightforward to obtain \[\lambda_{\min}(\mathfrak{h}^{(\lfloor\lambda\rfloor+1)}) < -\lambda,~  \lambda_{\min}(\mathfrak{h}^{(\lfloor\lambda\rfloor,1)}) < -\lambda,\text{ and }\lambda_{\min}(\mathfrak{c}_{\lfloor\lambda^2 -\lambda\rfloor+1}) <-\lambda.\]  Theorem \ref{OH} implies that there exist positive integers $p''_1(\lambda)$, $p''_2(\lambda)$ and $p''_3(\lambda)$ such that, for any integers
	$p''_1\geq p''_1(\lambda)$, $p''_2\geq p''_2(\lambda)$ and $p''_3\geq p''_3(\lambda)$,
	\[\lambda_{\min}(G(\mathfrak{h}^{(\lfloor\lambda\rfloor+1)},p''_1))< -\lambda,~\lambda_{\min}(G(\mathfrak{h}^{(\lfloor\lambda\rfloor,1)}, p''_2))< -\lambda,\]
	and  \[\lambda_{\min}(G(\mathfrak{c}_{\lfloor\lambda^2 -\lambda\rfloor+1}, p''_3))<-\lambda\]
	hold.
	By setting $p''(\lambda) = \max\limits_i p''_i(\lambda)$, we complete the proof. 
\end{proof}

\section{Proof of the main theorem}\label{sec:proof}







In this section, we will complete the proof of Theorem \ref{main sesqui}. Before that, the following theorem is necessary.

\begin{theorem}\label{thm:pre} Let $\lambda\geq2$ be an integer and $m(\lambda)$ the minimum positive integer such that for any integer $m\geq m(\lambda)$, the graph $\widetilde{K}_{2m}$ has smallest eigenvalue less than $-\lambda$. There exists a positive integer $n'\geq (m(\lambda)+1)^2$ such that for any integer $q\geq n'$, a real number $C'(\lambda,q)$ satisfying the following exists.
	
For any connected sesqui-regular graph $G$ with parameters $(v,k,c)$, where $v-k-1>\frac{(\lambda-1)^2}{4}+1$, and with smallest eigenvalue at least $-\lambda$, if $k\geq C'(\lambda,q)$, then the associated Hoffman graph $\mathfrak{g}:=\mathfrak{g}(G,m(\lambda),q)$ is fat and has the following properties:
\begin{enumerate}[\rm(i)]
	\item $\mathfrak{g}$ has $G$ as its slim graph.
	\item $|N_\mathfrak{g}^{\textbf{\textit{f}}}(x)|\leq \lambda$ for every $x\in V_{\textbf{\textit{s}}}(\mathfrak{g})$, and if the equality holds, then for each slim vertex $y$ adjacent to $x$, $|N_\mathfrak{g}^{\textbf{\textit{f}}}(x,y)|\geq1$.
	\item For $x\in V_{\textbf{\textit{s}}}(\mathfrak{g})$ and $F\in V_{\textbf{\textit{f}}}(\mathfrak{g})$, if $x$ is adjacent to $F$, then $x$ has at most $(\lambda-1)^2$ non-neighbors in the quasi-clique $Q_\mathfrak{g}(F)$.
	\item For every $F\in V_{\textbf{\textit{f}}}(\mathfrak{g})$, the quasi-clique $Q_\mathfrak{g}(F)$ is a clique.
	\item For $x\in V_{\textbf{\textit{s}}}(\mathfrak{g})$ and $F\in V_{\textbf{\textit{f}}}(\mathfrak{g})$, if $x$ is not adjacent to $F$, then $x$ has at most $\lambda^2-\lambda$ neighbors in the quasi-clique $Q_\mathfrak{g}(F)$.
	\item For $x\in V_{\textbf{\textit{s}}}(\mathfrak{g})$, there exist a slim vertex $y\in V_{\textbf{\textit{s}}}(\mathfrak{g})$ at distance $2$ from $x$ in $G$ and  a fat vertex $F'\in N_\mathfrak{g}(y)$ such that for any slim vertex $z$ in $N_\mathfrak{g}^{\textbf{\textit{s}}}(x,y)$, either $|N_\mathfrak{g}^{\textbf{\textit{f}}}(x,z)|\geq1$ or $z$ is adjacent to $F'$.
\end{enumerate}
\end{theorem}
\begin{proof}
By Theorem \ref{YK2}, we obtain $c\leq M_2(\lambda)$ immediately, where $M_2(\lambda)$ is such that Theorem \ref{YK2} holds. 

Let $p'(\lambda)$ and $p''(\lambda)$ be the integers in Lemma \ref{pp} and Lemma \ref{ppp} respectively, and let $p =\max\{p'(\lambda), p''(\lambda)\}$. Let  $t:=t(\lambda)=\lambda^2+1$, where $t(\lambda)$ is the integer in Lemma \ref{min2}. Let $n=n(m(\lambda), \lambda+1, \lambda^2 -\lambda+2, p)$, where $n(m(\lambda), \lambda+1, \lambda^2 -\lambda+2, p)$ is the integer in Proposition \ref{asso}. Let $n' = \max\{n, M_2(\lambda) + 2(\lambda-1)^2 + 3\}$. For an integer $q\geq n'$, let $C'(\lambda,q)=R((\lambda^2-\lambda) R(q-1,t), t)$, where $R(,)$ denotes the Ramsey number of two positive integers. 

Suppose $k \geq C'(\lambda,q)$.  Note that $G$ contains neither $\widetilde{K}_{2m(\lambda)}$ nor $K_{1,t}$ as an induced subgraph, since 
	$G$ has smallest eigenvalue at least $-\lambda$, but both  $\widetilde{K}_{2m(\lambda)}$ and $K_{1,t}$ have smallest eigenvalue less than $-\lambda$ by Lemma \ref{min2}. By the property of Ramsey numbers, every vertex of $G$ lies in a clique of size $(\lambda^2-\lambda)R(q-1,t)+1~(\geq q)$. This means that in the associated Hoffman graph $\mathfrak{g}:=\mathfrak{g}(G,m(\lambda),q)$, every slim vertex has a fat neighbor, and lies in at least one quasi-clique with size at least $ (\lambda^2-\lambda) R(q-1,t)+1$.  By Proposition \ref{asso}, Lemma \ref{pp} and Lemma \ref{ppp}, the associated Hoffman graph $\mathfrak{g}$ does not contain Hoffman graphs in the set $\{\mathfrak{h}^{(\lambda+1)}, \mathfrak{h}^{(\lambda, 1)}, \mathfrak{c}_{\lambda^2 -\lambda+1}\}\cup \{\mathfrak{q}(H(\lambda)_i)\}_{i=1}^{r(\lambda)}$ as induced Hoffman subgraphs. We will show that $\mathfrak{g}$ satisfies {\rm (i)}--{\rm (v)}.

{\rm (i)} This follows from the definition of associated Hoffman graphs directly.

{\rm (ii)} If $|N_\mathfrak{g}^{\textbf{\textit{f}}}(x')|>\lambda$ for some $x'\in V_{\textbf{\textit{s}}}(\mathfrak{g})$, then $\mathfrak{g}$ will contain $\mathfrak{h}^{(\lambda+1)}$ as an induced Hoffman subgraph. This leads to a contradiction. Suppose that a vertex $x$ with exactly $\lambda$ fat neighbors has a slim neighbor $y$ which satisfies $|N_\mathfrak{g}^{\textbf{\textit{f}}}(x,y)|=0$. Then $\mathfrak{g}$ will contain $\mathfrak{h}^{(\lambda, 1)}$ as an induced Hoffman subgraph, as $\mathfrak{g}$ is fat. This leads to a contradiction again. 

{\rm (iii)} If $x$ has $(\lambda-1)^2+ 1$ non-neighbors in $Q_\mathfrak{g}(F)$, then $\mathfrak{g}$ will contain one of $\mathfrak{q}(H(\lambda)_i)$'s as an induced Hoffman subgraph. This gives a contradiction.

{\rm (iv)}  By the definition of $\mathfrak{g}$, we have that the size $|N_\mathfrak{g}^{\textbf{\textit{s}}}(F)|$ of the quasi-clique $Q_\mathfrak{g}(F)$ satisfies $|N_\mathfrak{g}^{\textbf{\textit{s}}}(F)|\geq q\geq n'\geq M_2(\lambda) + 2(\lambda-1)^2+3$.  Suppose that there are two non-adjacent vertices $x$ and $y$ in $Q_\mathfrak{g}(F)$. Then both $x$ and $y$ have at most $(\lambda-1)^2$ non-neighbors in $Q_\mathfrak{g}(F)$ by {\rm (iv)}. This implies that they have more than $M_2(\lambda)$ common neighbors in $Q_\mathfrak{g}(F)$, which contradicts the assumption $c \leq M_2(\lambda)$.  

{\rm (v)} By {\rm (iv)}, the quasi-clique $Q_\mathfrak{g}(F)$ is a clique. If $x$ has $\lambda^2 -\lambda+ 1$ neighbors in $Q_\mathfrak{g}(F)$, then $\mathfrak{g}$ will contain $\mathfrak{c}_{\lambda^2 -\lambda+1}$ as an induced Hoffman subgraph. This gives a contradiction.

{\rm (vi)} Assume $N_\mathfrak{g}^{\textbf{\textit{f}}}(x)=\{F_1,F_2,\ldots, F_s\}$. Let $T=N_\mathfrak{g}^{\textbf{\textit{s}}}(x)-\{w\mid w$ is adjacent to one of $ F_i$ for $i=1,\ldots, s\}$. If $T=\emptyset$, then {\rm(vi)} follows immediately. So we may assume $|T|\geq1$. If $|T|\geq R(q-1, t)$, then $T$ contains a clique of size at least $q-1$. By the definition of associated Hoffman graphs, there will be one more new fat vertex with respect to a quasi-clique containing $x$ and a clique of size at least $q-1$ in $T$ which can be attached to the vertex $x$. This contradicts the fact that $x$ has exactly $s$ fat neighbors. So we obtain $1\leq |T| \leq R(q-1,t)-1$. Let $w'$ be a vertex in $T$. We have concluded that $w'$ lies in a quasi-clique with size at least $(\lambda^2-\lambda)R(q-1,t)+1$ before. Let $F'$ be the fat vertex with respect to this quasi-clique. Note that for each vertex $w\in(T-N_\mathfrak{g}^{\textbf{\textit{s}}}(F'))\cup\{x\}$, $w$ has at most $\lambda^2 -\lambda$ neighbors in the quasi-clique $Q_\mathfrak{g}(F')$ by {\rm (v)}. Thus there exists at least one vertex in $Q_\mathfrak{g}(F')$ which has no neighbors in $(T-N_\mathfrak{g}^{\textbf{\textit{s}}}(F'))\cup\{x\}$, as  $|N_\mathfrak{g}^{\textbf{\textit{s}}}(F')|>(\lambda^2 - \lambda)(R(q-1, t)-1)+(\lambda^2 - \lambda)\geq(\lambda^2 - \lambda)|(T-N_\mathfrak{g}^{\textbf{\textit{s}}}(F'))\cup\{x\}|$. Let $y$ be such a vertex in $Q_\mathfrak{g}(F')$. This shows the existence.
\end{proof}

Now, we are in the position to prove Theorem \ref{main sesqui}.

\vspace{0.2cm}
\noindent{\it Proof of Theorem \ref{main sesqui}.} Let $C(\lambda):=C'(\lambda,n')$, where $n'$ and  $C'(\lambda,n')$ are such that Theorem \ref{thm:pre} holds. Let $G$ be a connected sesqui-regular graph with parameters $(v,k,c)$, where $k\geq C(\lambda)$, and with smallest eigenvalue at least $-\lambda$.  We may assume $v-k-1>\frac{(\lambda-1)^2}{4}+1$. By Theorem \ref{thm:pre}, there exists a fat Hoffman graph $\mathfrak{g}$ which satisfies properties {\rm (i)--(vi)} in Theorem \ref{thm:pre}.

Let $x$ be a slim vertex of $\mathfrak{g}$. Suppose $N_\mathfrak{g}^{\textbf{\textit{f}}}(x)=\{F_1,F_2,\ldots,F_s\}$, where $s \leq \lambda$ by Theorem \ref{thm:pre} {\rm (ii)}. Let $y$ be a vertex at distance $2$ from $x$ in $G$ and $F'$ a fat neighbor of $y$ such that Theorem \ref{thm:pre} {\rm (vi)} holds. By Theorem \ref{thm:pre} {\rm (iv)}, $y$ does not lie in any of the quasi-cliques $Q_\mathfrak{g}(F_i)$'s.  Now we look at the set $N_\mathfrak{g}^{\textbf{\textit{s}}}(x,y)$.

First, consider the case where every slim neighbor of $x$ lies in one of the quasi-clique $Q_\mathfrak{g}(F_i)$. By Theorem \ref{thm:pre} {\rm (v)}, $y$ has at most $\lambda^2 - \lambda$ neighbors in each $Q_\mathfrak{g}(F_i)$. Therefore, $c=|N_\mathfrak{g}^{\textbf{\textit{s}}}(x,y)|\leq s(\lambda^2-\lambda)\leq\lambda(\lambda^2-\lambda)=\lambda^2(\lambda-1)$.

Now suppose that there are slim neighbors of $x$ which do not lie in any of $Q_\mathfrak{g}(F_i)$'s. Then $s \leq \lambda-1$ by Theorem \ref{thm:pre} {\rm (ii)}. Theorem \ref{thm:pre} {\rm (vi)} says that every vertex $z\in N_\mathfrak{g}^{\textbf{\textit{s}}}(x,y)$ lies in either one of $Q_\mathfrak{g}(F_i)$'s or $Q_\mathfrak{g}(F')$.  Note that $x$ has at most $\lambda^2 - \lambda$ neighbors in $Q_\mathfrak{g}(F')$, and $y$ has at most $\lambda^2 - \lambda$ neighbors in each $Q_\mathfrak{g}(F_i)$, thus $c=|N_\mathfrak{g}^{\textbf{\textit{s}}}(x,y)|\leq (\lambda^2-\lambda)+s(\lambda^2-\lambda)\leq\lambda(\lambda^2-\lambda)=\lambda^2(\lambda-1)$.

This completes the proof.
\qed

\section*{Acknowledgements}

J.H. Koolen is partially supported by the National Natural Science Foundation of China (No. 12071454), Anhui Initiative in Quantum Information Technologies (No. AHY150000) and the project ``Analysis and Geometry on Bundles'' of Ministry of Science and Technology of the People’s
Republic of China.

B. Gebremichel is supported by the Chinese Scholarship Council at USTC, China.

Q. Yang is partially supported by the Fellowship of China Postdoctoral Science Foundation (No. 2020M671855).

We are also grateful to the referee for his/her careful reading and valuable comments. 


\end{document}